\documentclass[12pt]{article}

\usepackage[english]{babel}

\usepackage[a4paper,top=2cm,bottom=2cm,left=3cm,right=3cm,marginparwidth=1.75cm]{geometry}

\usepackage{amsmath}
\usepackage[colorlinks=true, allcolors=blue]{hyperref}
\usepackage{graphicx}
\usepackage{amssymb}
\usepackage{amsthm}
\usepackage{subcaption}

\theoremstyle{definition}
\newtheorem*{definition}{Definition}

\newcommand{\es}{\text{.}}
\newcommand\N{{\mathbb N}}

\newcommand\OB{{\mathbb O}}
\newcommand\SB{{\mathbb S}}
\newcommand\PB[1]{{\mathbb P}_{#1}}

\newcommand{\e}[1]{{\rm e_{#1}}} 
 
\newcommand{\Cl}[1]{{\rm Cl}({#1})}

\newcommand{\Spin}[1]{{\rm Spin}({#1})}
\newcommand{\Sharp}[1]{{\rm Sharp}({#1})}

\newcommand{\PG}[2]{{\rm PG}(#1,#2)}
\newcommand\degs{^\circ}
\newcommand\tab[1]{\begin{table}[#1]\centering\vskip -2ex}
\newcommand\tac[3]{\begin{table}[#1]\caption{\label{tab:#2}#3}\centering\vskip -2ex}
\newcommand\tae{\end{table}}
\newcommand\tar[1]{Table~\ref{tab:#1}}

\newcommand\eqb[1]{\begin{equation}\label{eqn:#1}}
\newcommand\eqe{\end{equation}}
\newcommand\eqr[1]{{\rm (\ref{eqn:#1})}}
\newcommand\eab[1]{\[\begin{array}{#1}}
\newcommand\eae{\end{array}\]}
\newcommand\esb{\[\begin{split}}
\newcommand\ese{\end{split}\]}
\newcommand\elb[1]{\[\label{eqn:#1}\begin{aligned}}
\newcommand\ele{\end{aligned}\]}
\newcommand\fib[3]{\begin{figure}#1\begin{center}\begin{tabular}{c}
    \includegraphics#2{figures/#3}}
\newcommand\fie[2]{\end{tabular}\end{center}\vskip -4ex
    \caption{\label{fig:#1}#2}\end{figure}}
\newcommand\fig[1]{Figure~\ref{fig:#1}}

\newtheorem*{theorem}{Theorem}

\title{Clifford Algebra Calibration Post-Quantum Cryptography}
\author{G.~P.~Wilmot, J.~Chappell, D.~K.~Abbott}
\date{}

\begin{document}
\maketitle
\begin{abstract}
   The double lock scenario allows a secret to be transferred from one person to another without having previously exchanged secrets such as private/public keys. This can be realised by rotations of generalised calibrations in Clifford algebra. Calibrations map to Cayley-Dickson algebras and generalising this provides the mathematical structure enabling the enormous code space needed for a cryptosystem. Commuting rotations in large dimensional spaces are used but the complexity comes from the message being embedded in generalised calibrations. This system uses ideals of Clifford algebras and quasi-algebras that are sometimes embedded in Cayley-Dickson algebras making it a candidate for post-quantum cryptography. Understanding whether the new mathematics of generalised calibrations can produce a secure cryptosystem is the challenge presented in this manuscript.
   \par\vspace{1ex}\noindent{\bf Clifford algebras, quasi-Cayley-Dickson algebras, calibrations, post-quantum cryptography}
\end{abstract}
\section{Introduction}
The double lock box is an example of symmetric encryption whereby a secret is passed between two agents without prior knowledge being exchanged. It was theorised by Kish-Sethuraman~\cite{Kish} (KS) but was already superseded by the Diffie-Hellman system, which requires fewer transmissions to establish a secure channel. In fact, the factorisation of large primes used in the original implementation of Diffie-Hellman can be used in the KS system~\cite{Klappenecker} with less efficiency. But the double lock box admits a solution that is akin to information-set decoding, so can be considered for post-quantum cryptography. 
\begin{figure}[ht]\begin{center}\begin{tabular}{c}
    \includegraphics[width=8cm]{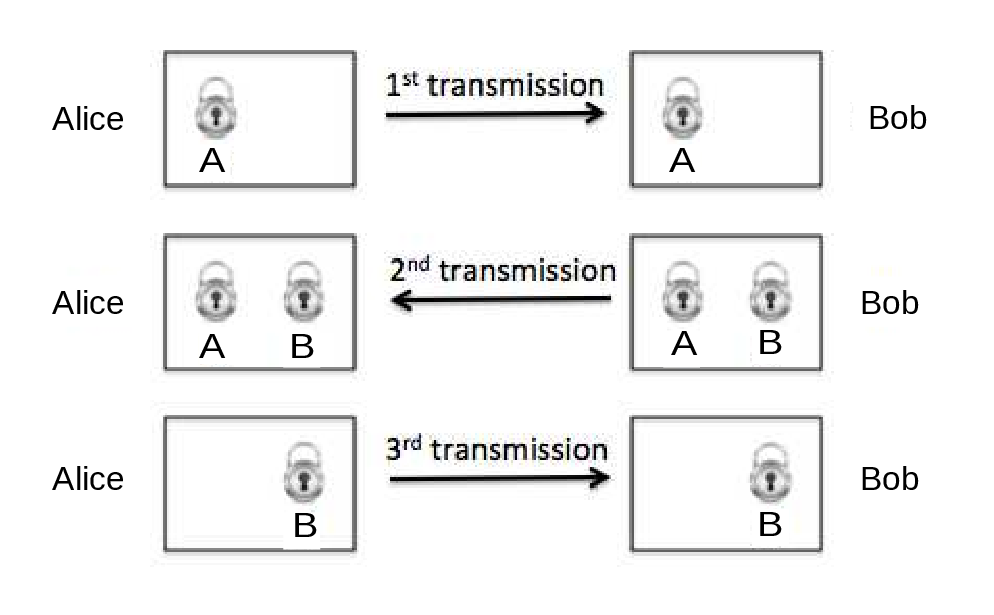}
\fie{pad}{Transmission scheme}
The KS cipher, as described by Chappell~\cite{Chappell}, and shown in \fig{pad}, involves Alice constructing a secure, tamper evidence box with double latches. A secret is placed inside and a padlock on the outside. This is posted to Bob who adds another padlock in parallel to the first padlock and returns the box. Alice then unlocks the first lock and again forwards the box to Bob who unlocks the second lock to retrieve the secret. The digital version of this scheme would transmit the digital box in the open, via email for example. To enable such a scheme, KS show that the lock operators, ${\mathcal A}$ and ${\mathcal B}$, must commute. If the message in the box is represented as $m$, then
  \eqb{AB} {\mathcal A}({\mathcal B}m) = {\mathcal B}({\mathcal A}m)\es \eqe
The construct used here is orthogonal rotations so that ${\mathcal A}{\mathcal B}={\mathcal B}{\mathcal A}$, implying no intersection of orthogonal dimensions between ${\mathcal A}$ and ${\mathcal B}$. 

The orthogonal rotations for $\Cl{n}$ use the Spin group with basis $\e{ij}$ such that $\phi'=\theta_{ij}(\phi)={\rm R}_{ij}\phi{\rm R}^{-1}_{ij}$, where ${\rm R}_{ij} = \cos(\frac\pi4) +\e{ij}\sin(\frac\pi4)$, $i\ne j\in\N_1^n$. The rotated calibration $\phi'$ has the form of $\phi$ but with different signs and 3-forms but the inverse rotation, $\theta_{ji}$ will return $\phi$. Now since orthogonal rotations commute $[\e{ij},\e{kl}]=0$, for $k,l\ne i,j$, then
\[ \theta_{lk}(\theta_{ji}(\theta_{kl}(\theta_{ij}(\phi)))) = \phi\es \]
This is a symmetric encryption scheme where Alice has used rotation $\theta_{ij}$ and Bob has used $\theta_{kl}$ without prior knowledge other then agreement of orthogonality. The size can be inferred from the message size. For the very small example of seven bits there are only 3840 possible outcomes and less than 21 total rotations, which are the keys to the locks in this encryption scheme.

Equation \eqr{AB} is usually interpreted with $m$ being a large integer or a vector so that ${\mathcal A}m$ has the same form or properties. But Clifford algebra offers another message format that has properties whereby ${\mathcal A}m$ is not a linear combination of $m$. These are generalised calibrations that are akin to using information-set codes for each bit of the message. The rotations not only change the codes but also the values of the bits. The number of codes increases as $O(n)$ over the number of rotations, $\binom{n}2$, which has $O(n^2)$ complexity increase.

Calibrations in the positive definite Clifford algebra, $\Cl{n}$, $n=2^N-1$, $N>1$ are shown to map to all Cayley-Dickson algebras~\cite{Wilmot3}. Calibrations specify faces and parity of generalised polygons or polytops with $n$ completely connected  vertices by arrows called a directed $(n-1)$-simplex. Calibrations represent coverage of all edges of the simplex by faces that only intersect at vertices, which is designated independent faces because their parities are independent. This means any pair of dimensions are uniquely associated with another dimension, not unique for $n>3$, which defines a cross product. This, in turn, defines a multiplication table and hence an algebra. Some of these generalised calibrations of $\Cl{n}$ map to Cayley-Dickson algebras but others are shown to map to quasi-Cayley-Dickson algebras~\cite{Wilmot1, Wilmot3}. This mapping is only important in the context of the cryptosystem in order to prove that the generalised calibrations are well defined. 
\begin{definition} The generalised calibrations are divided into calibrations that map to Cayley-Dickson algebras and quasi-calibrations that map to quasi-Cayley-Dickson algebras. Reference~\cite{Wilmot3} showed that for $N=3$, calibrations transform to other calibrations under any rotation and hence, since rotations are invertible, quasi-calibrations transform amongst themselves. Calibrations are a much smaller subset of generalised calibrations and this needs to be taken into consideration later.
\end{definition}
\begin{definition}
A quasi-Cayley-Dickson algebra is identified by the $\PB{k}$ code where $k$ is the number of non-associative unique triads, $[a,b,c]\ne0$, for $a<b<c$, and the product $[a,b,c]=(ab)c-a(bc)$ is defined by the terms of the calibration $\pm\e{ijk}$ s.t. if $a=\e{i}$ and $b=\e{j}$ then $ab=\pm\e{k}$ and $\e{j}\e{i}=\mp\e{k}$. The number of quasi-Cayley-Dickson algebras is six for $\Cl7$ and $103$ for $\Cl{15}$ and is unknown thereafter but the number of primary representations of calibrations is called the {\it representation space}, which is an extremely large space that enables the security of the cryptosystem.
\end{definition}

\tac{ht}{dims}{Generalised Calibration Dimensions}
\begin{tabular}{|c|c|c|c|c|c|}\hline
Level &Dimension &Rotations &Msg Bit Length &Rep. Space\\\hline
 3 &7 &21 &7 &30 \\
 4 &15 &105 &35 &$64.8\times10^6$ \\
 5 &31 &465 &155 &$8.2\times10^{26}$ \\
 6 &63 &1953 &651 &$9.8\times10^{76}$ \\
 7 &127 &8001 &2667 &$1.8\times10^{199}$ \\
 8 &255 &32,385 &10,795 &$6.1\times10^{485}$ \\
 9 &511 &130,305 &43,435 &$1.9\times10^{1,140}$ \\
 10 &1,023 &522,753 &174,251 &$5.3\times10^{2,607}$ \\\hline
\end{tabular}\tae

The dimensions, number of rotations, sizes of the generalised calibration and representation space sizes are provided in \tar{dims} for levels up to $N=10$. The message length is the number of codes or terms in the generalised calibration. The number of codes available as 3-forms, the $\PB{k}$ code and other parameters such as the number of calibrations and quasi-calibrations are provided in \tar{pars}. 

An example provides an ideal way to introduce this encryption system. For $N=3$ with seven dimensions the first generalised calibration from~\cite{Wilmot1} is
\[ \phi_1 = \e{123} +\e{145} +\e{167} +\e{246} +\e{257} +\e{347} +\e{356}\es \]
This is a 3-form with seven terms called a primary since it has all positive terms. Any set of sign variations can be selected to represent bits with $+\rightarrow1$ and $-\rightarrow0$. 

The set of $\binom72=21$ $90\degs$ basis rotations or 2-forms in $\Cl7$, applied repeatedly, was shown in~\cite{Wilmot1} to generate 30 other 3-forms that cover all the $\binom73=35$ single 3-forms in $\Cl7$. Changing the signs of certain terms provides a calibration that maps to octonions, $\OB$, and reflections generate another 15 maps to $\OB$ thus generating the 480 representations of octonions. The remaining $30\times2^7-480$ sign variations generate six other algebras that are power-associative and called quasi-octonions and these have many representations. This is the mathematical underpinning of the calibration cryptographic technique. The 3-form terms act as information-set codes with $30\times112=2240$ possible outcomes if a quasi-calibration was the starting point. The rotations can be performed in any order and will only commute if orthogonal. Octonions are the last of the normed Cayley-Dickson algebras with higher levels being power-associative containing zero-divisors. The sedenions, $\SB$, at level $N=4$, have 15 vertices of the 14-simplex represented by the 15 basis elements of $\Cl{15}$. The possible outcomes here are $32528\times64.8\times10^6$.

The octonion example here is minimal in that only seven bits can be encoded. But \tar{dims} demonstrates the message length and number of rotations increase quickly. Given that it is the permutations of the rotations with up to $\lfloor{\binom{n}2/2}\pm1\rfloor!$ possibilities for Alice and Bob then the complexity has increased exponentially as seen by the representation space column of \tar{dims}. This column is the number of primary calibrations available, one of which has the signs in the correct order to be the cipher.

\tac{ht}{pars}{Generalised Calibration Parameters}
\begin{tabular}{|c|c|c|c|c|c|c|}\hline
Level &Codes &$\PB{k}$ Code &Internal Cal. Count &All-Even \\\hline
 3 &35 &28 &127 & 1\\
 4 &455 &252 &32768 &7 \\
 5 &4495 &2156 &$2.1\times10^{9}$ &35 \\
 6 &39,711 &18,732 &$9.2\times10^{18}$ &155 \\
 7 &333,375 &159,852 &$1.7\times10^{38}$ &651 \\
 8 &2,731,135 &1,331,820 &5.8$\times10^{76}$ &2667 \\
 9 &22,108,415 &10,902,892 &$6.7\times10^{153}$ &10,795 \\
 10 &177,910,271 &88,310,124 &$9.0\times10^{307}$ &43,435 \\\hline
\end{tabular}\tae

The columns in \tar{pars} are the number of Codes, which is the number of distinct 3-forms used in the Message Length of \tar{dims}, the $\PB{k}$ Code for the calibration and the number of generalised calibrations within a primary, which is the number of permutations of signs in each primary calibration, $2^m$ where $m$ is the message bit length. This totals to the number of calibration and quasi-calibrations. Note that $\PB{28}$ is used to represent the calibration that maps to octonions, even though octonions are not power associative. All other $\PB{k}$ codes map to power associative algebras. As stated above, there are six quasi-octonions and we later find 103 quasi-sedenions alongside sedenions with code $\PB{252}$. It is easy to see the Calibration Count is much smaller than the number of quasi-calibrations, which is analysed further because the octonion calibrations are invariant under all Pin transformations, which limits their applicability as a cryptosystem.

The mathematics that underpins this cryptosystem is derived from ideals of $\Cl{n}$ subalgebras, called $\Sharp{n}$ algebras. These algebras are new and specify the structure of Cayley-Dickson algebras by identifying non-associative rings related to the associative generalised calibrations. This is described in the following section. Section~3 describes the encryption/decryption code presented in Appendix~A. Section~4 provides mathematical tests of the code by verifying some of the numbers in \tar{pars}. This code is presented in Appendix~B and includes finding the 103 quasi-sedenion calibrations provided in Appendix~C.

\section{Mathematical Theory}
Clifford algebras are represented as directed simplices and can be visualised as projections for sizes up to $N\leq4$. For the dimensions selected here $n=2^N-1$, $N\ge2$, these is the same infinite series as the finite geometry group $\PG{N}2$\cite{Wilmot3}. For example, the projection of the 6-simplex from a point at infinity to a plane gives the Fano plane, $\PG22$, shown in \fig{fano}.
\begin{figure}[ht]\begin{center}\begin{tabular}{c}
    \includegraphics[width=5cm]{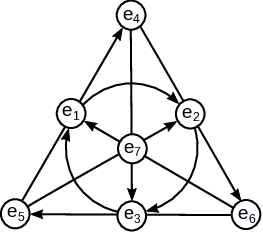}
\fie{fano}{Fano plane calibration diagram}

For the selected dimensions of $\Cl{n}$, $n=2^N-1$, $N>1$, then the number of basis rotations is $\binom{n}2$, as shown in \tar{dims}. This is also the number of edges in the $(n-1)$-simplex and is always divisible by three since $\binom{n}2$ is a product of two consecutive numbers less than a power of two and this gives the number of faces that cover each edge once. This property is the main characteristic that defines calibration structures. The number of faces of this simplex is $\binom{n}3$ and a third of these can be selected that are independent and cover all edges so the primary calibration consists $\frac13\binom{n}2$ 3-forms and this is provided in the Message Bit Length column of \tar{dims}.

The $\PB{k}$ column where $k$ is the code is derived from the number of subalgebras found in~\cite{Wilmot3} and copied in \tar{usa}. The value of $k$ for each level is the sum of each subalgebra's count multiplied by its $k$ value where $k=28$ for $\OB$.

\tac{ht}{usa}{Subalgebra Counts}
\begin{tabular}{|c|c|c|c|c|c|c|c|c|c|}  \hline   
{\bf Level}& $3$& $4$& $5$& $6$& $7$& $8$& $9$ &$10$ \\\hline
$\OB$      &1 &8  &50  &310  &2046 &14,478  &107,950  &831,470 \\
$\PB4$     &0 &7  &63  &413  &2583 &16,905  &118,251  &873,901 \\
$\PB{12}$  &0 &0  &42  &504  &4158 &30,996  &229,194  &1,729,728 \\
$\PB{14}$  &0 &0  &0   &168  &3024 &34,776  &332,640  &2,912,616 \\\hline
\end{tabular}\tae

The generalised calibration count within each primary is just the permutations of all the signs, which is $2^n-1$ where $n=2^N-1$. These are called internal generalised calibrations and are shown in \tar{pars}. This number multiplied by the representation space size from \tar{dims} provides the number of possible outputs for the encoded message. The size of the calibration representation space is given by the following.
\begin{theorem}
For $P(n)$ being the number of primary calibration representations at level $n$ then the recursive formula is
\eqb{pn} P(n) = \frac{(n -1)!}{(\frac12(n+1))!}P(n-1)\text{, \quad for $n=2^N-1$, $N\in\N_2^{\infty}$.} \eqe
\end{theorem}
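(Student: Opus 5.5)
The plan is to find a closed form for $P(n)$ and then read off the recursion as a ratio of consecutive closed forms. Throughout, $P(n-1)$ on the right of \eqr{pn} is understood as the count at the previous level, i.e.\ at dimension $m=\frac12(n-1)=2^{N-1}-1$. This is consistent with \tar{dims}: for $n=7$, $6!/4!\cdot P(3)=30$ with $P(3)=1$, and for $n=15$, $14!/8!\cdot 30=64{,}864{,}800$.

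The first step, and the main obstacle, is to identify a primary calibration at level $N$ with a labelling of the points of $\PG{N-1}{2}$ by the $n$ basis elements $\e1,\dots,\e n$. In this labelling the $3$-forms of the calibration are exactly the lines of the geometry.

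\begin{itemize}
\item Structurally this is suggested by what the paper already states. A primary calibration is a set of $\frac13\binom n2$ triples covering every edge of the $(n-1)$-simplex exactly once, so it is a Steiner triple system on $n$ points. The paper also notes that the dimensions $n=2^N-1$ form the same series as $\PG{N}{2}$, via the Fano-plane projection for $N=3$.
\item A general Steiner triple system need not be projective, so this step needs more. I would invoke the result of~\cite{Wilmot3} that the generalised calibrations reached by rotations from $\phi_1$ correspond to the Cayley-Dickson (and quasi-Cayley-Dickson) multiplication tables.
\item Any Cayley-Dickson table on the indices $1,\dots,n$, up to relabelling, has the triads $\{i,j,i\oplus j\}$ (bitwise XOR). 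These triads are precisely the lines of $\PG{N-1}{2}$ on the nonzero vectors of $\mathbb F_2^N$.
\item Hence every primary calibration is a relabelled copy of the projective Steiner system. Conversely, every relabelling is reached, because the rotations $\theta_{ij}$ act as transpositions of the labels up to sign, and transpositions generate the full symmetric group $S_n$.
\end{itemize}

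Given this identification, the second step is orbit--stabiliser under $S_n$ acting on labellings. The stabiliser of the line set is the collineation group of $\PG{N-1}{2}$. Over $\mathbb F_2$ this is $GL(N,2)$, of order $\prod_{i=0}^{N-1}(2^N-2^i)$. This gives the closed form
\[ P(n)=\frac{n!}{|GL(N,2)|}, \]
which matches $7!/168=30$ and $15!/20160=64{,}864{,}800$.

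The final step is the routine ratio computation. From the order formula,
\[ \frac{|GL(N,2)|}{|GL(N-1,2)|}=2^{N-1}(2^N-1)=n\cdot\tfrac12(n+1). \]
Therefore
\[ \frac{P(n)}{P(m)}=\frac{n!}{m!}\cdot\frac{1}{n\cdot\frac12(n+1)}=\frac{(n-1)!}{m!\,(m+1)}=\frac{(n-1)!}{(\frac12(n+1))!}, \]
using $m+1=\frac12(n+1)$. This is \eqr{pn}.

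If the appeal to~\cite{Wilmot3} in the first step turns out too weak, I would instead argue directly and recursively. Fix a sub-calibration on $m$ of the labels, which is a hyperplane $\PG{N-2}{2}$. Then count the ways to complete it to the full geometry on the remaining $\frac12(n+1)$ points. This should reproduce the same factor $(n-1)!/(\frac12(n+1))!$ after dividing by the number of hyperplanes. The hardest part of this route is proving the completion count, which is again the projective-rigidity fact.
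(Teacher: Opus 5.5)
Your proof is correct, and it takes a different route from the paper's.

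\textbf{How the routes differ.} The paper never obtains a closed form. It divides $n!$ by $n\,(\frac12(n+1))!/P(n-1)$, a divisor reached heuristically by dualising the $n$ terms of the non-associative calibration into $\frac12(n+1)$-forms in the Sharp algebra. It also explicitly \emph{assumes} the associative/non-associative correspondence for $N>4$, supported only by the $N=4$ computation. You instead use orbit--stabiliser under $S_n$ with stabiliser $GL(N,2)$. This gives the closed form $P(n)=n!/|GL(N,2)|$ without any such assumption, and the recursion then follows from your ratio computation. It also explains the paper's divisor:
\[ n\,(\tfrac12(n+1))!/P(n-1)=n\cdot 2^{N-1}|GL(N-1,2)|=|GL(N,2)|. \]
That is, $n$ hyperplanes times the order of a hyperplane stabiliser, which acts as the affine group on the $\frac12(n+1)$ points off the hyperplane. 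This matches the paper's picture of $n$ terms, each dualised to a $\frac12(n+1)$-form.

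\textbf{Step~1 is easier than you expect.} You can drop the Cayley--Dickson detour, which is slightly loose anyway. An algebra isomorphism need not be a signed relabelling of the basis, and quasi-Cayley--Dickson tables are not Cayley--Dickson tables. A direct argument has three parts.
\begin{itemize}
\item By the paper's construction, the representation space is the closure of the greedy primary under basis rotations. These rotations act on unsigned triples as transpositions, so the space is exactly the $S_n$-orbit of the greedy primary.
\item The greedy primary is the XOR system for every $N$. Argue by induction on lexicographic order: a triple $a<b<c$ is accepted iff $c=a\oplus b$. Each pair of an XOR triple lies in no other XOR triple. If $c\ne a\oplus b$, one of $\{a,b\},\{a,c\},\{b,c\}$ lies in a lexicographically earlier XOR triple.
\item Any automorphism $\sigma$ satisfies $\sigma(a\oplus b)=\sigma(a)\oplus\sigma(b)$, so it is $\mathbb F_2$-linear. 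Hence the stabiliser is $GL(N,2)$ directly, including $N=2$, with no appeal to the fundamental theorem of projective geometry.
\end{itemize}
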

\begin{proof}
Reference~\cite{Wilmot3} showed a relationship between the sedenion associative calibration and the non-associative calibration. We assume here that this correspondence occurs for all higher levels, $N>4$, which allows the later to define the automorphisms of the former. Since both calibrations have the same representation space of 64.8 million generalised calibrations, as proved by the test code in Appendix~B, then this seems highly likely. An associative calibration has $\binom{n}2/3$ 3-form terms that represent Cayley-Dickson algebras consisting of this many quaternions. The non-associative calibration consisting of $n$ terms is invertible so has a dual by multiplying it with the $\Cl{n}$ pseudoscalar to bring it into the Sharp algebra as an $((n+1)/2)$-form. Since this has even degree it generates $90\degs$ rotations that are automorphisms and define 2-form automorphisms of the Cayley-Dickson algebra. Each of the $n$ terms thus provides $((n+1)/2)!$ permutations that need to be replaced with the permutations provided by the subalgebras, which are the representations of the calibrations given by each term and have sizes $P(n-1)$. Hence the total number of permutations, $n!$, is divided by the permutations of the calibrations of the subalgebras, ${n((n+1)/2)!}/{P(n-1)}$.
\end{proof}

Generalised calibrations can be generated manually by building upon the previous level. We start with $N=2$, giving dimension $n=3$, and the calibration is $\e{123}$, which designates the $\Cl3$ quaternions, $\e{12}$, $\e{23}$ and $\e{13}$. The opposite parity, $-\e{123}$, maps to anti-quaternions where the product is $+1$. The next level is $N=3$ with dimension $n=7$ and to build upon $\e{123}$ we add 3-forms that contain pairs of indices not already used, with the lowest indices included first. Using the indices $4$, $5$, $6$ and $7$ builds the seven terms of $\phi$ above. The algorithm to generate primary calibrations is simply sum all simplex faces $\sum_{\mu\in\binom{n}3} \e{\mu_1\mu_2\mu_3}$, if any ordered pair, $(\mu_1,\mu_2), (\mu_2,\mu_3), (\mu_1,\mu_3)$, is not already included.

This generates the first primary with independent faces that cover all edges once and applying a basis $\Spin{n}$ rotation to this will generate another structure with the same property. Taking the absolute value and selecting unique structures allows this process to be repeated for all basis 2-forms until no new structures are found. Such a process is used in Section~4 to generate all 64.8 million primaries that have associative calibrations that generate $\SB$. The same code is used to generate 64.8 million 7-form non-associative calibrations, indicating that the assumption in the theorem that the two types of calibrations are related is correct.

The rotations for Alice and Bob must be orthogonal and a fixed scheme could be shared as prior knowledge or as part of a standard. Choosing small and large dimensions for rotations will favour left and right parts of the calibration so even and odd dimensions are the obvious choice. This brings the number of potential rotations to slightly less than half for all levels. For $N=4$ there are 28 all-odd rotations and 21 all-even rotations giving a ratio of $49/105=0.47$ available reflections and this ratio increases as the level increases. Even though this limits the rotations to about $24\%$ of the available rotations for each agent, these rotations can be repeated and this is needed to generate the large representation space for level $N=4$ in the test code of Appendix~B. It is also advantageous providing more opportunity for non-commuting rotations to occur.

One of the authors has pointed out that the octonion calibration contains an all-even 3-form, $\e{246}$, so initial even rotations would not encrypt this 3-form. The All-Even column of \tar{pars} shows that the number of all-even 3-forms increases as the level increases. There are no all-odd 3-forms in the first calibrations generated up to level $N=10$ so the sender of the messages, Alice, must choose all odd rotations while Bob chooses all even. Some of Alice's encrypted bits would then not be changed by Bob, making cracking the full code slightly easier. As more rotations are performed then all-odd cases increase and all-even 3-forms decrease evening out this relationship so that on average about 15\% of 3-forms will not by changed by Bob. Worse than this, after Alice un-rotates the third message, the all-even 3-forms will be in the clear or unencrypted. Both circumstances can by avoided if Alice avoids the all-even 3-forms in the original message by spreading the message or placing the bits at these location at the end of the message. Bob will need to take note of these all-even 3-forms in the first transmitted message and apply the same scheme. Hence the efficiency of the encryption scheme has decreased from $25\%$ to $21\%$ but increasing the number of repeats of the rotations will compensate for this.

The other special case mentioned in the introduction is that if the encoded bits generate a calibration then the output within the representation space is diminished. This is the case for octonions but for sedenions it is found that rotations within Fano planes and between $\OB$ planes or between $\PB4$ planes has this property. But basis $\Spin{15}$ rotations that mix $\OB$ and $\PB4$ Fano planes change the sedenion calibration into a quasi-calibration. Of the 105 basis rotations 49 have this property, covering all-even and all-odd rotations, which means the chance of starting with a calibration and not rotating to a quasi-calibration is very small. The test code mentioned below needs to rotate calibrations many times in order to find all possible 64.8 million primaries so an alternative scheme to increasing the level would be to repeat the random assignment of all possible rotations a few times to increase complexity without much extra work.

The test code uses ASCII characters ending in a control-d and the remainder of the message length is left un-randomised. Having a better termination scheme and randomising the remaining bits should be considered. Such schemes and those to avoid all-even 3-form triads are not included in the code provided. It is also imagined that ASCII is not the ideal encoding.

\section{Cryptosystem Description}
The Python code is presented in Appendix~A. To create an encrypted message, run the following command line for Windows (for Linux replace py with python or python3),
\begin{verbatim}
  py CalCrypt.py N <base filename> <cipher text>
\end{verbatim}
Thereafter, the output file is transmitted to the other agent and the saved file processed as
\begin{verbatim}
  py CalCrypt.py <saved filename>
\end{verbatim}
On the first usage, the text provided on the command line is packed into single 7-bit bytes, rotated a random number of times with odd dimension rotations and added to the calibration structure, ready for transmission. After transmission, each receiver must unpack the transmission into the $\langle$saved filename$\rangle$ for further processing.

The second usage rotates the calibration structure with random even dimension rotations. Both sets of rotations are stored and used in reverse to un-rotate the calibration structure on subsequent processing stages. On the third usage of this command, the decoded cipher is written to the output file. This sequence can be run on the same machine for investigation purposes.

The level $N=8$ with the 255 dimensions, represented by a single byte is likely to provide a practical encryption scheme. It has a code length of 10,795 bits, a transmission length of 33KB including 1 bit for each sign and stored agent rotation keys selected from about a quarter of the $\binom{255}2=$32,385 possible rotations. Starting with any calibration structure then the secret is encoded in the 10,795 signs of the 3-form structure. 

The timings for an 13th Gen Intel(R) Core(TM) i9-13900KF are 2.3 seconds to create the calibrations and 25.8 seconds to perform Alice's rotations. The later timing will be the same for all subsequence steps. The calibration time could be shortened using multiple threads.

\section{Test Code Description}
The Python code is presented in Appendix~B and has only one command line,
\begin{verbatim}
    py CalTest.py [options] N
\end{verbatim}
Use the``-'' option to view the list of options available.

The case $N=3$ will generate a list of the 30 primary associative calibrations in 7 dimensions in string format in one file and a structure specifying the six quasi-octonion codes and first calibration in a separate file. The format of this second file is:
\begin{verbatim}
    [ ([Pcode, ...], [Pindex, ...], Uindex, primary]), (...) ]
\end{verbatim}
where Pcode is the $\PB{k}$ code, Pindex is the signed offset for this Pcode within the primary, Uindex is the signed offset for the first calibration and the primary is represented as string format, which are triples of hexadecimal digits for each 3-form. The Pcodes are unique and represent the first primary calibration and the first occurrence at the beginning of this primary is provided by the respective Pindex. This takes less than a second to run.

The case $N=4$ generates the 64.8 million primary associative calibrations in 15 dimensions and the 103 quasi-sedenion quasi-calibrations. The output files are in batches of 2 million calibrations in order for Windows platforms to be able read the files. The files have the same format as above but takes a long time to run, as discussed below.

The case $N=7$ generates the 64.8 million non-associative calibrations for 15 dimensions. This again writes successive files with 2 million calibrations each with 105 hexadecimal characters per line but these are interpreted as 15 lots of 7 digits for each 7-form. There are no quasi-calibrations generated with this option. 

The two 15-D cases take many days to run on the HPC Phoenix system at Adelaide University. Since this system has a time limit of 3 days then options are available to process both the calibration generation phase and quasi-calibration phase in batches. The Processed Size reported in a previous run is used in the ``-l'' option and a time of one hour less than 3 days is used in the ``-t'' option to stop processing and write the results to the output files. Using the ``-p4'' option reads these files before processing starts. The initial run uses a Processed Size value of zero. The following slurm command is used to run the code on the Phoenix HPC.
\begin{verbatim}
    sbatch -p icelake --mem 80GB -n 70 --time 2:59:59 \
      --export=ALL,LEVEL=4,TIME=71,LAST=<Processed Size> calCrypt.sh
\end{verbatim}

The calCrypt.sh bash script has the following contents.
\begin{verbatim}
    ./CalTest.py -vc $$(($SLURM_NPROCS -1)) -t $TIME -l $LAST \
       -p $((($LAST==0) ?3 :4)) -d "/hpcfs/users/<user>" $LEVEL
\end{verbatim}
The process is repeated until the Process Size no longer increases. The processing results for the 103 quasi-calibrations at level $N=4$ are presented in Appendix~C. These results were originally generated using a Clifford algebra calculator and was able to generate all 15-D associative calibrations within 3 days using 80 cores and the 103 quasi-calibrations in another 2.5 days. The difference in time is that the calculator is optimised for 15-D while the cryptocode is unlimited up to practical time and memory resource limits. Hence this code is presented as a test of the system's ability to generate valid quasi-calibrations at known levels.

The Clifford/Cayley-Dickson algebras calculator is also written in Python and this was then translated to the standalone CalCrypt and CalTest packages. The github URL for the calculator and the code presented in Appendices~A and~B is\par\centerline{\url{https://github.com/GPWilmot/geoalg}.}

\section{Summary}
This cyptosystem has the ingredients to provide a post-quantum cryptosystem. The only prior knowledge required is the encoding of the binary string being transferred. The code presented here used ASCII for easier demonstration but any encoding scheme could be included in the standard. This paper presents this cryptosystem for the cryptography community to analyse in order to evaluate its potential.

The authors acknowledges the support of an Australian Government Research Training Program Scholarship. Support from the Australian Research Council (FL240100217) is also gratefully acknowledged.

\newpage
\section*{Appendix A - Cryptosystem Code}
{\fontsize{10pt}{10pt}\selectfont\begin{verbatim}
#!/usr/bin/env python3
#####################################################################
## File: CalCrypt.py - symmetric encryption - G.P.Wilmot (c) Mar 2026
#####################################################################
import sys, random, time

class CalCrypt():
  """Class to create quasi-calibration at level N=3..10 and encrypt
     with setCode() and rotate() or decrypt using reverse() first.
     Calibrations map to Cayley-Dickson algebras and the Sharp(N)
     algebras in Cl(n), n=2**N-1, calculate them for all N>1."""

  def __init__(self, N, harpCal=None):
    """Create the initial calibration structure for this level."""
    self.__checkType(N, int, "init", (3,10))
    n = int(pow(2, N)) -1
    if harpCal is None:   # Build calibration
      got = set([])
      cal = []
      for tri in self.__comb(n, 3, list(range(1, n +1))):
        pair1 = tuple(tri[0:2])
        pair2 = tuple(tri[1:3])
        pair3 = (tri[0], tri[2])
        if pair1 not in got and pair2 not in got and pair3 not in got:
          got.update((pair1, pair2, pair3))
          cal.append(tri)
    else:
      cal = harpCal
    self.__level = N
    self.__dim = n
    self.__msgLen = len(cal)   # (n,2) //3 = n!/(n-2)!/2 //3/def rot
    self.__cal = list(list(_) for _ in cal)
    self.__sgns = [1] *(self.__msgLen)  # Key results

  def __str__(self):
    return str({"level":   self.__level,  "dimension": self.__dim,
                "msg.len": self.__msgLen})

  def rotate(self, array):
    """Pass pairs of dimensions to rotate the code."""
    maxRot = self.__msgLen *3 //2 +1
    self.__checkType(array, (list, tuple), "rotate", length=(1, maxRot))
    list(self.__checkType(_, (list, tuple), "rotate", length=2) for _ in array)
    list(self.__checkType(_[0], int, "rotate", (1, self.__dim)) for _ in array)
    list(self.__checkType(_[1], int, "rotate", (1, self.__dim)) for _ in array)
    self.__rotate(array)
  def __rotate(self, array):
    for pair in array:
      for tri in enumerate(self.__cal):    # Rotate calibration structure
        sgn = self.__sgns[tri[0]]
        rot = tri[1][:]
        for val in enumerate(pair):
          if val[1] in tri[1]:
            idx = tri[1].index(val[1])
            rot[idx] = pair[1 -val[0]]
            if val[0]:
              sgn = -sgn                   # Change signs for 2nd rot. index
        sgn = self.__sort(rot, sgn)
        self.__cal[tri[0]] = rot
        self.__sgns[tri[0]] = sgn
    self.__sort(self.__cal, 1, self.__sgns)

  def reverse(self, array):
    """Return the array for rotate to decrypt instead of encrypt signs."""
    maxRot = self.__msgLen *3 //2 +1
    self.__checkType(array, (list, tuple), "reverse", length=(1, maxRot))
    list(self.__checkType(_, (list, tuple), "reverse", length=2) for _ in array)
    return list([_[1], _[0]] for _ in reversed(array))


  def setCode(self, signs, cal=None):
    """Set signs and optionally the calibration structure."""
    self.__checkType(signs, (list, tuple), "setCode", length=self.__msgLen)
    list(self.__checkType(_, int, "setCode", (-1,1)) for _ in signs)
    if cal is not None:
      self.__checkType(cal, (list, tuple), "setCode", length=self.__msgLen)
      list(self.__checkType(_, (list,tuple), "setCode", length=3) for _ in cal)
      n = self.__dim
      for idx in (0, 1, 2):
        list(self.__checkType(_[idx], int, "setCode", (1, n)) for _ in cal)
    self.__setCode(signs, cal)
  def __setCode(self, signs, cal):
    self.__sgns = list(signs)
    if cal is not None:
      self.__cal = list(cal)

  def getCode(self):
    """Return signs and the calibration structure."""
    return (self.__sgns, self.__cal)

  def __sort(self, array, sgn, signs=None):
    """Sort the list changing parity for pair swaps."""
    noMore = True
    while noMore:              # Bubble sort - needed to not expose rotations used
      noMore = False
      for idx in range(1, len(array)):
        if array[idx -1] > array[idx]:
          tmp = array[idx -1]; array[idx -1] = array[idx]; array[idx] = tmp
          sgn = -sgn
          if signs:
            tmp = signs[idx -1]; signs[idx -1] = signs[idx]; signs[idx] = tmp
          noMore = True
    return sgn

  def comb(self, n, r, basis):
    """Yield the combinations of r in n elements for basis of length n."""
    self.__checkType(n, int, "comb", (1,0))
    self.__checkType(r, int, "comb", (0,n))
    self.__checkType(basis, (list, tuple), "comb", length=n)
    list(self.__checkType(_, int, "comb", (1, n)) for _ in basis)
    return self.__comb(n, r, basis)
  def __comb(self, n, r, basis):
    if r > n //2:
      rng = range(1, len(basis) +1)
      for elem in reversed(list(self.__perm(n, [1] *(n -r), 1))):
        yield tuple(basis[idx -1] for idx in rng if idx not in elem) 
    else:
      for elem in self.__perm(n, [1] *r, 1):
        yield tuple(basis[idx -1]  for idx in elem)

  def __perm(self, n, arr, offset):
    """Return permutation or combination arrays. The arr needs to be set to a
       list of one. Set offset to 0 to get permutations or 1 for combinations
       instead."""
    if offset < 0 or n < 0 or len(arr) > n:
      raise Exception("Invalid parameter for perm or comb: %s" %n)
    if len(arr) == 0:
      yield []
    else:
      for recuse in range(offset if offset else 1, n +1):
        if len(arr) > 1:
          for more in self.__perm(n, arr[1:], recuse if offset else 0):
            arr = [recuse] +more
            dup = False
            for idx,elem in enumerate(arr):
              if elem in arr[idx+1:]:
                dup = True
                break
            if not dup:
              yield arr
        else:
          yield [recuse]
  def __checkType(self, arg, typ, method, size=[], length=None):
    """Raise exception if arg not the correct type in method for size."""
    if not isinstance(arg, typ):
      tmp = str(arg)
      if len(tmp) > 9:
        tmp = str(type(arg))
      raise Exception("Invalid parameter type (%s) for %s" %(tmp, method))
    if length:
      size = length
      arg = len(arg)
    if size:
      if isinstance(size, int):
          if size != arg:
            raise Exception("Invalid %s !=%d for %s" %(arg, size, method))
      elif isinstance(size, (list, tuple)):
        if len(size) == 2 and (arg < size[0] or \
              (size[1] > 0 and arg > size[1])):
          raise Exception("Invalid %s !in [%d,%s] for %s" %(arg, size[0],
                          size[1] if size[1] else "..", method))
      elif size:
        raise Exception("Invalid check %s parameters in %s" %(arg, method))

  def seedRand(self):
    """The system does this by default."""
    random.seed(time.time())

  def randRot(self, stage):
    """Generate random rotation pairs, stage=0: single for init, =1: odd
       dmensions only, =2: even dimensions, try for all, drop overlaps."""
    self.__checkType(stage, int, "randRot", (0,2))
    if stage == 0:   # init case
      size = 2
      start = 1
      step = 1
    else:
      size = self.__msgLen *3 //2 +1
      start = 0 if stage == 1 else 2
      step = 2
    rots = []
    for pos in range(1, size):
      pair = [random.randrange(start, self.__dim, step),
              random.randrange(start, self.__dim, step)]
      if stage == 1: # Sender gets odd dimensions
        pair = [pair[0] +1, pair[1] +1]
      if pair[0] > pair[1]:
        tmp = pair[0]; pair[0] = pair[1]; pair[1] = tmp
      if pair not in rots and pair[0] != pair[1]:
        rots.append(pair)
    return rots

  def pack(self, msg):
    """Change ASCII msg into 7 bits -> sign +1/-1 ints."""
    self.__checkType(msg, str, "pack", length=(1,0))
    if len(msg) *7 > self.__msgLen:
      raise Exception("Message it too long by %d character(s)" \
                              %(len(msg) -self.__msgLen //7))
    signs = []
    for ch in msg:
      for bit in format(ord(ch), "07b"):
        signs.append(-1 if bit == "0" else 1)
    if len(signs) < self.__msgLen -7:
      for bit in format(0x25, "07b"): # Add control-d
        signs.append(-1 if bit == "0" else 1)
    for idx in range(len(signs), self.__msgLen):  # pad with random bits
      signs.append(1 if random.randrange(2) else -1)
    return signs

  def unpack(self, signs):
    """Change sign +1/-1 ints -> 7 bits for ASCII msg."""
    self.__checkType(signs, (list, tuple), "unpack", length=self.__msgLen)
    msg = ""
    for idx in range(0, len(signs) -6, 7):
      bits = ""
      for bit in signs[idx :idx +7]:
        bits += "1" if bit == 1 else "0"
      ch = chr(int(bits, 2))
      if ch == chr(0x25): # EOM control-d
        break
      msg += ch
    return msg

  def save(self, struct, filename):
    """Save a python list with some type formatting of the structure."""
    nl = ""
    try:
      with open(filename, 'w') as fp:
        if isinstance(struct, (list, tuple, set)):
          fp.write("[\\\n")
          for val in struct:
            if isinstance(val, (list, tuple, set)):
              fp.write("%s  %s,\n" %(nl, val))
              nl = ""
            elif isinstance(val, str):
              fp.write("  \"%s\",\n" %val)
              nl = ""
            else:
              fp.write("  %s," %val)
              nl = "\n"
          fp.write("]")
        elif isinstance(struct, str):
          fp.write("%s" %struct)
        else:
          fp.write("%s" %struct)
        fp.write("\n")
    except BaseException as e:
      sys.stderr.write('%s: %s\n' %(type(e).__name__, e))
  def _generateSpace(self, store, parts, selfGetStrFn, selfSetStrFn,
                     status=None, pos=0, timeout=0):
    """Test code. Return new rotated 3- or 7-form calibrations as strings."""
    allRots = list((x,y) for x in range(1, self.__dim) \
                         for y in range(x +1, self.__dim +1))
    out = set()
    for offs,calStr in enumerate(store[parts[0] :parts[1]]):
      if status:
        status[pos] = offs
      cal = selfSetStrFn(calStr)
      for rot in allRots:
        self.__setCode(self.__sgns, cal)
        self.__rotate((rot,))
        newCal = selfGetStrFn()
        if newCal not in store:
          out.add(newCal)
      if timeout and timeout < time.time():
        break
    return out

  @staticmethod
  def read(filename):
    """Read a Python structure (generally saved by save())."""
    try:
      with open(filename) as fp:
        return eval(fp.read())
    except Exception:
      typ,var,tb = sys.exc_info()
      raise Exception(typ(str(var).replace("\\\\\\\\", "\\")))

_CalCrypt = CalCrypt.__doc__  # Doco for calculators
################################################################################
if __name__ == '__main__':
  import traceback
  class HelpException(Exception):
    """Don't report this exception."""
    pass
  try:
    options = ("-v", "/v", "-vh", "-hv", "-h", "/?")
    optIdx = (1,5) if sys.platform == "win32" else (0,4)   # (v,h)
    verbose = False
    offset = stage = 0
    while len(sys.argv) > 1 +offset and (sys.argv[1 +offset] in options):
      verbose = sys.argv[1 +offset] in options[:4]
      if sys.argv[1 +offset] in options[-4:]:
        help(CalCrypt)
        raise HelpException
      offset += 1
    if len(sys.argv) > 2 +offset:
      stage = 1
      lev = int(sys.argv[1 +offset])
      base = sys.argv[2 +offset]
      msg = " ".join(sys.argv[3 +offset:])
      if not msg:
        raise Exception("Message is blank")
    elif len(sys.argv) == 2 +offset:
      filename = sys.argv[1 +offset]
      stage, lev, base, signs, cal = CalCrypt.read(filename)
    else:
      raise Exception("Invalid command line arguments")
    crypt = CalCrypt(lev)
    if stage == 1:
      crypt.seedRand()
      crypt.setCode(crypt.pack(msg))
      rot1 = crypt.randRot(2)
      crypt.rotate(rot1)
      crypt.save([rot1], base +"1.key")
      crypt.save([2, lev, base, *crypt.getCode()], base +"1.cry")
      crypt.save([rot1], base +"1.key")
      sys.stdout.write("Transmit file: %s1.cry (Key file: %s1.key)\n" %(base, base))
    elif stage == 2:
      crypt.seedRand()
      crypt.setCode(signs, cal)
      rot2 = crypt.randRot(1)
      crypt.rotate(rot2)
      crypt.save([rot2], base +"2.key")
      crypt.save([3, lev, base, *crypt.getCode()], base +"2.cry")
      sys.stdout.write("Transmit file: %s2.cry (Key file: %s2.key)\n" %(base, base))
    elif stage == 3:
      crypt.setCode(signs, cal)
      rot1 = CalCrypt.read(base +"1.key")[0]
      crypt.rotate(crypt.reverse(rot1))
      crypt.save([4, lev, base, *crypt.getCode()], base +"3.cry")
      sys.stdout.write("Transmit file: %s3.cry\n" %base)
    elif stage == 4:
      crypt.setCode(signs, cal)
      rot2 = CalCrypt.read(base +"2.key")[0]
      crypt.rotate(crypt.reverse(rot2))
      msg = crypt.unpack(crypt.getCode()[0])
      crypt.save(msg, base +".txt")
      sys.stdout.write("Message file: %s.txt\n" %base)
    else:
      raise Exception("Corrupt transmitted file")
  except Exception as e:
    cmd = ("py " if sys.platform == "win32" else "python[3] ") +"CalCrypt.py"
    print("Start usage: %s Level Basename \"Message\" (or unquoted)" %cmd)
    print("Other usage: %s \"Filename\"" %cmd)
    print("Help option:  %s %s [%s]" %(cmd, options[optIdx[1]], options[optIdx[0]]))
    print("Desc: Start Level = 3..10 determines the maximum message length.")
    desc = ( \
      "Each usage creates a file to transmit to the other side and Other use",
      "processes the file with the last transmit being the decoded message.",
      "Messaage lengths are: 1, 5, 22, 93, 381, 1542, 6205, 24893 chars.")
    for txt in desc:
      print("      " +txt)
    if type(e) != HelpException:
      if verbose:
        traceback.print_exc()
      else:
        sys.stderr.write('%s: %s\n' %(type(e).__name__, e))
\end{verbatim}}

\newpage
\section*{Appendix B - Test Code}
{\fontsize{10pt}{10pt}\selectfont
}

\newpage
\section*{Appendix C - 103 Sedenion Quasi-Calibrations}
\tac{!ht}{qs1}{Sedenion Quasi-Calibrations}
{\fontsize{10pt}{10pt}\selectfont
 %
}\vskip 120ex\tae
\end{document}